\newtheorem{theorem}{Theorem}[subsection]
\newtheorem{corollary}[theorem]{Corollary}
\newtheorem{lemma}[theorem]{Lemma}
\theoremstyle{definition}
\newtheorem{remark}[theorem]{Remark}
\newtheorem{definition}[theorem]{Definition}
\newcommand{\la}{\langle}
\newcommand{\ra}{\rangle}
\newcommand{\V}{\Vert}
\newcommand{\Z}{\mathbb{Z}}
\newcommand{\T}{\mathbb{T}}
\newcommand{\C}{\mathbb{C}}
\newcommand{\inv}{^{-1}}
\numberwithin{equation}{section}
\title{Irreducible representations of nilpotent groups generate classifiable C*-algebras}
\author{Caleb Eckhardt and Elizabeth Gillaspy}
\address{Department of Mathematics, Miami University, Oxford, Ohio}
\email{eckharc@miamioh.edu}
\address{Department of Mathematics, University of Colorado - Boulder, Boulder, Colorado}
\email{elizabeth.gillaspy@colorado.edu}
\thanks{C.E.\ was partially supported by a grant from the Simons Foundation.}
\begin{document}
\begin{abstract} We show that C*-algebras generated by irreducible representations of finitely generated nilpotent groups satisfy the universal coefficient theorem of Rosenberg and Schochet.  This result combines with previous work to show that these algebras are classifiable by their Elliott invariants within the class of unital, simple, separable, nuclear C*-algebras with finite nuclear dimension that satisfy the universal coefficient theorem. We also show that these C*-algebras are central cutdowns of twisted group C*-algebras with homotopically trivial cocycles.
\end{abstract}
\maketitle

\section{Introduction} 
This note concludes a long line of study into the $C^*$-algebras generated by irreducible representations of finitely generated nilpotent groups. Specifically, we prove that  such $C^*$-algebras satisfy the universal coefficient theorem (UCT) of Rosenberg and Schochet \cite{Rosenberg87}.  We combine this with a slew of other results to show that these algebras
 are classifiable by their Elliott invariant within the class $\mathcal{C}$ of unital, simple, separable, nuclear C*-algebras with finite nuclear dimension that satisfy the UCT. 
 
Let $G$ be a finitely generated nilpotent group and $\pi$ an irreducible unitary representation of $G.$ Let $C^*_\pi(G)$ be the $C^*$-algebra generated by $\pi(G).$ Tikuisis, White and Winter recently took the final step in a long and beautiful journey of showing that two elements of $\mathcal{C}$ with the same Elliott invariant are isomorphic \cite[Corollary D]{Tikuisis15}.  Therefore our job consists of showing that $C^*_\pi(G)\in \mathcal{C}.$ 

We have known for a while that $C^*_\pi(G)$ is nuclear \cite{Lance73} and simple \cite{Moore76}. Recently, the first author together with McKenney \cite{Eckhardt14b} showed that $C^*_\pi(G)$ has finite nuclear dimension (this work directly relied on a long list of results including \cite{Eckhardt14,Matui12, Matui14a, Rordam04,Winter12} -- see the introduction of \cite{Eckhardt14b} for the full story). As pointed out in \cite[Theorem 4.5]{Eckhardt14b} the only missing ingredient to show $C^*_\pi(G)\in \mathcal{C}$ was the UCT.  It was also pointed out in \cite[Theorem 4.6]{Eckhardt14b} that if $G$ is torsion free and $\pi$ is a faithful representation then $C^*_\pi(G)$ satisfies the UCT.  This observation  has already found success in \cite{Eckhardt14a} where the authors calculated the Elliott invariant of C*-algebras generated by faithful irreducible representations of the (torsion free) unitriangular group $UT(4,\Z)$, thus classifying them within $\mathcal{C}.$

Proving Theorem \ref{thm:main}, that $C^*_\pi(G)$ satisfies the UCT, is the main goal of this note.  In the course of our investigations  we noticed that $C^*_\pi(G)$ is isomorphic to a central cutdown of a twisted group C*-algebra  $C^*(G/N,\sigma)$ for some 2-cocycle $\sigma$, where $N$ is a finite index subgroup of $Z(G).$ Moreover, the cocycle $\sigma$ is homotopic to the trivial cocycle.
We include this as Theorem  \ref{thm:structure} as it may be of independent interest and useful for K-theory calculations.
\section{Nilpotent lemmas} \label{sec:nilemmas}
We first recall and prove necessary facts about nilpotent groups used in subsequent sections. We refer the reader to Segal's book \cite{Segal83} for information about nilpotent and polycyclic groups.
Throughout this section $G$ is a \textbf{finitely generated nilpotent group.} 
  We let $Z(G)$ denote the center of $G$ and 
\begin{equation*}
G_f=\{ x\in G: \textup{the conjugacy class of }x\textup{ is finite } \}
\end{equation*}
Clearly $Z(G)\leq G_f  \leq G.$ Define the \emph{torsion subgroup } of $G$ as
\begin{equation*}
T(G)=\{ x\in G: x\textup{ has finite order } \}.
\end{equation*}
\begin{remark} In general, $T(G)$ need not be a subgroup of $G$ but it is a standard exercise to show that $T(G) \leq G$ for nilpotent $G$ (see \cite[Corollary 1.B.10]{Segal83}).
\end{remark}
Every finitely generated nilpotent group is \emph{polycyclic}, that is, there is a normal series
\begin{equation} \label{eq:poly}
\{ e \}=G_n \trianglelefteq G_{n-1}\trianglelefteq \cdots \trianglelefteq G_1\trianglelefteq G_0=G
\end{equation}
where $G_{i}/G_{i+1}$ is cyclic for each $i=0,\ldots,n-1.$ From this definition it follows that polycyclic groups are finitely generated and that subgroups of polycyclic groups are polycyclic. Moreover, for finitely generated nilpotent groups, $T(G)$ is polycyclic and therefore finite, since it must satisfy (\ref{eq:poly}). From this it easily follows that $T(G)\leq G_f.$
\\\\
It is most likely well-known among group theorists that $Z(G)$ has finite index in $G_f$ and that $G/G_f$ is torsion free.  We were not able to locate references for these facts (although they are more-or-less corollaries of Baer's \cite{Baer48}) so we include the brief proofs.
\begin{lemma} \label{lem:tfreecase} Let $G$ be torsion free, finitely generated and nilpotent.  Then $Z(G)=G_f.$
\end{lemma}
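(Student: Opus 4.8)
The plan is to prove the nontrivial inclusion $G_f\subseteq Z(G)$ by showing that $C_G(x)=G$ for every $x\in G_f$ (the reverse inclusion $Z(G)\subseteq G_f$ is trivial, as central elements have singleton conjugacy classes). First I would record the elementary reduction: since the conjugacy class of $x$ is in bijection with $G/C_G(x)$, the hypothesis $x\in G_f$ says exactly that $m:=[G:C_G(x)]<\infty$. Letting $G$ act by left translation on the $m$ left cosets of $C_G(x)$, every $g\in G$ induces a permutation whose order divides $m!$, so $g^{m!}$ fixes the coset $C_G(x)$, i.e. $g^{m!}\in C_G(x)$ for all $g\in G$.

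The crux is then the classical fact that in a torsion-free nilpotent group \emph{centralizers are isolated}: if $y,h\in G$ and $y^{k}\in C_G(h)$ for some $k\geq 1$, then $y\in C_G(h)$. Granting this and applying it with $h=x$, $k=m!$, we conclude $g\in C_G(x)$ for every $g\in G$, hence $C_G(x)=G$ and $x\in Z(G)$, which finishes the proof of the lemma. So the entire content of the statement is funneled into isolation of centralizers; everything else is formal.

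For the isolation statement — which is where torsion-freeness and nilpotence genuinely enter — I would either cite it (it goes back to Mal'cev's work on torsion-free nilpotent groups; the isolator calculus is also developed in Segal's book \cite{Segal83}) or prove it as follows. The subgroup $\langle h,y\rangle$ is finitely generated, torsion-free and nilpotent, hence embeds in its rational Mal'cev completion $P$, a radicable torsion-free nilpotent group with the unique root property. Under the Mal'cev correspondence $\log\colon P\to\mathfrak p$, where $\mathfrak p$ is a nilpotent $\Q$-Lie algebra, one has $C_P(h)=\exp\bigl(\ker\operatorname{ad}(\log h)\bigr)$; since $\ker\operatorname{ad}(\log h)$ is a $\Q$-subspace of $\mathfrak p$, the group $C_P(h)$ is radicable, so from $y^{k}\in C_P(h)$ and uniqueness of $k$-th roots in $P$ we get $y=(y^{k})^{1/k}\in C_P(h)$, i.e. $[y,h]=e$ in $P$ and hence in $\langle h,y\rangle$.

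The main obstacle is precisely this isolation-of-centralizers fact; the descent from ``$C_G(x)$ has finite index'' to ``$C_G(x)=G$'' is purely combinatorial. A more self-contained alternative would be an induction on the nilpotency class, where the usual step of passing to $G/Z(G)$ (which can introduce torsion) is replaced by passing to $G/\sqrt{Z(G)}$, the quotient by the isolator of the center; but one then still has to handle the residual case $[g,h]^{j}\in Z(G)$, and controlling the torsion that appears in successive quotients is the real work. In any presentation, once isolation of centralizers is available the lemma is immediate.
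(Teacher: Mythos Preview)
Your argument is correct, but the route differs from the paper's. The paper first invokes Baer's result that $Z(G_f)$ has finite index in $G_f$, then splits into two cases. If $Z(G_f)=Z(G)$, Mal'cev's theorem that $G/Z(G)$ is torsion free forces the finite group $G_f/Z(G_f)$ to be trivial. If $Z(G_f)\neq Z(G)$, one picks $x\in Z(G_f)\setminus Z(G)$ and $y\in G$ not commuting with $x$; conjugation by $y$ acts on the free abelian group $Z(G_f)$ by a unipotent integer matrix (this is where nilpotence enters), and unipotent matrices have no nontrivial periodic points, contradicting $x\in G_f$. So the paper's proof is a short linear-algebra argument layered on two cited structural facts.

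Your proof, by contrast, funnels everything into the single classical fact that centralizers in torsion-free nilpotent groups are isolated, reducing the lemma to the purely combinatorial observation that $g^{m!}\in C_G(x)$ whenever $[G:C_G(x)]=m$. This is arguably cleaner and more conceptual, and in fact the paper's main input from Mal'cev (that $G/Z(G)$ is torsion free) is itself an immediate corollary of isolation of centralizers. The trade-off is that your justification of isolation goes through the Mal'cev completion and the Lie correspondence, which is heavier machinery than the paper needs; the paper stays entirely at the level of integer matrices and quoted group-theoretic lemmas.
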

\begin{proof} By  \cite[Lemma 3]{Baer48}, the group  $Z(G_f)$ has finite index in $G_f.$  
\\\\
\emph{Case 1.} $Z(G_f)=Z(G).$  By \cite{Malcev49},  $G/Z(G)$ is torsion free. Then $G_f/Z(G_f)$ is a finite, torsion free group, i.e. $G_f=Z(G_f).$
\\\\
\emph{Case 2.} $Z(G_f)\neq Z(G).$   By assumption there are elements $x\in Z(G_f)\setminus Z(G)$ and $y\in G$ such that $yxy^{-1}\neq x.$  Let $\alpha$ be the automorphism of $Z(G_f)$ induced by conjugation by $y.$  Because $x\in G_f$ we  have $\alpha^n(x)=x$ for some $n>1.$    Moreover, $Z(G_f)$ is a finitely generated torsion free abelian group, since $G$ is nilpotent, torsion free, and finitely generated.
Since the group generated by $Z(G_f)$ and $y$ is nilpotent it follows that 
 the matrix for $\alpha$  is unipotent.  In particular, $1$ is the only eigenvalue of $\alpha$, so $\alpha$ cannot have any periodic, non-fixed points, a contradiction  to the fact that $\alpha^n(x)=x$.  Therefore $Z(G)=Z(G_f)$,  so Case 2 never occurs and we are in Case 1.
\end{proof}
\begin{lemma} \label{lem:findex} Let $G$ be a finitely generated nilpotent group, then $Z(G)$ has finite index in $G_f.$
\end{lemma}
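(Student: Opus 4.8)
We want to show $Z(G)$ has finite index in $G_f$ for any finitely generated nilpotent $G$. The natural strategy is to reduce to the torsion-free case already handled in Lemma \ref{lem:tfreecase} by quotienting out the (finite) torsion subgroup $T(G)$.

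First I would set $\bar G = G/T(G)$, which is a finitely generated torsion-free nilpotent group by the discussion preceding Lemma \ref{lem:tfreecase} (recall $T(G)\leq G$ is finite there). Let $q\colon G\to \bar G$ be the quotient map. The key observation is that $q(G_f)\subseteq \bar G_f$: if $x$ has a finite conjugacy class in $G$, then its image has a finite conjugacy class in $\bar G$ (the image of a finite set is finite). By Lemma \ref{lem:tfreecase} applied to $\bar G$, we have $\bar G_f = Z(\bar G)$. So $q(G_f)\subseteq Z(\bar G)$, meaning that for every $x\in G_f$ and every $g\in G$, the commutator $[x,g]$ lies in $T(G)$.

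Now I would exploit finiteness of $T(G)$. Fix a finite generating set $g_1,\dots,g_k$ of $G$. For $x\in G_f$, the tuple $([x,g_1],\dots,[x,g_k])$ lies in the finite set $T(G)^k$, so the map $x\mapsto ([x,g_1],\dots,[x,g_k])$ from $G_f$ to $T(G)^k$ has fibers of finite index — provided I can show this map's fibers are (cosets of) a subgroup, or at least that the kernel-type set $\{x\in G_f : [x,g_i]=e \text{ for all } i\}$ has finite index in $G_f$ and is contained in $Z(G)$. The containment in $Z(G)$ is immediate: if $x$ commutes with every generator $g_i$, it commutes with all of $G$. For the finite-index claim, I would argue that $x\mapsto [x,g_i]$ need not be a homomorphism in general, but since $[x,g_i]\in T(G)\leq Z(G_f)$... — here one must be slightly careful. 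A cleaner route: $G_f/Z(G_f)$ is finite by \cite[Lemma 3]{Baer48} (used in Lemma \ref{lem:tfreecase}), so it suffices to show $Z(G_f)$ has finite index in... no, rather to show $Z(G_f)\cap$ (something) lands in $Z(G)$ with finite index. Actually the slickest argument: $Z(G_f)$ is a finitely generated abelian group on which $G$ acts by conjugation; the image of $G_f$ in $\bar G$ being central means this action, composed with reduction mod torsion, is trivial, so $G$ acts on $Z(G_f)$ through its action on the finite torsion part plus a unipotent part, and the fixed subgroup $Z(G_f)\cap Z(G)$ has finite index in $Z(G_f)$. Combined with $[G_f:Z(G_f)]<\infty$, this gives $[G_f:Z(G)]<\infty$.

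The main obstacle I anticipate is the bookkeeping in that last step: making precise why the conjugation action of $G$ on the finitely generated abelian group $Z(G_f)$ has a finite-index subgroup of fixed points, given that it becomes trivial after killing torsion. The point is that such an action is by unipotent-plus-torsion automorphisms (nilpotency forces the torsion-free part of the action to be unipotent, as in the proof of Lemma \ref{lem:tfreecase}), and a unipotent integer matrix with a finite-order perturbation fixes a finite-index subgroup; but one should verify there are no subtleties when $Z(G_f)$ has its own torsion. I would likely sidestep this by first replacing $G$ with $G/T(G)$ at the very start only for the torsion-free conclusion, then transferring back via the finite kernel $T(G)$ using the elementary fact that if $H\leq G$ and $q(H)$ has finite index in $\bar G$ with $\ker q$ finite, then $H\cdot\ker q$ has finite index in $G$ — and $Z(G)$ differs from the preimage of $Z(\bar G)$ by a finite amount.
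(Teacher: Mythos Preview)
Your plan is essentially the same as the paper's proof. Both pass to $\bar G=G/T(G)$ and use Lemma~\ref{lem:tfreecase} to get $[x,g]\in T(G)$ for $x\in G_f$, both invoke Baer's result that $[G_f:Z(G_f)]<\infty$, and both then show that powers of elements of $Z(G_f)$ land in $Z(G)$. The paper does the last step element-by-element via the identity $[z^n,y_i]=[z,y_i]^n$ (valid because $z\in Z(G_f)$ and $[z,y_i]\in T(G)\leq G_f$), concluding that $G_f/Z(G)$ is torsion and hence finite; your ``approach B'' does the equivalent computation globally: writing $A=Z(G_f)$ and $m$ for the exponent of $T(A)$, one has $ga^mg^{-1}=(a t)^m=a^m$ in the abelian group $A$, so $A^m\leq Z(G)$ has finite index in $A$. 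Your worry about ``subtleties when $Z(G_f)$ has its own torsion'' is unfounded---this argument goes through cleanly.

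One genuine issue: your proposed ``sidestep'' at the end is circular. Observe that in fact $G_f=q^{-1}(Z(\bar G))$: you proved one inclusion, and conversely if $[x,g]\in T(G)$ for all $g$ then the conjugacy class of $x$ is contained in the finite set $xT(G)$, so $x\in G_f$. Hence the claim that ``$Z(G)$ differs from the preimage of $Z(\bar G)$ by a finite amount'' is precisely the statement of the lemma. Drop the sidestep and commit to approach B, which is correct and matches the paper.
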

\begin{proof}    Let $x\in G_f.$  By \cite[Lemma 3]{Baer48},   there is some $n\geq1$ such that $z=x^n \in Z(G_f).$ Let $y_1, \ldots, y_k$ generate $G.$ Since $zT(G)\in (G/T(G))_f$ and $G/T(G)$ is torsion free, by Lemma \ref{lem:tfreecase} it follows that $zT(G)\in Z(G/T(G))$, i.e. 
\begin{equation*}
[z,y_i]\in T(G) \textup{ for }i=1, \ldots, k.
\end{equation*}
Then for $n\geq 1$ we have
\begin{equation} \label{eq:commid}
[z^{n+1},y_i]=z^n[z,y_i]z^{-n}[z^n,y_i]=[z,y_i][z^n,y_i].
\end{equation}
The first equality holds in every group and the second follows because $z\in Z(G_f)$ and $[z,y_i]\in T(G)\leq G_f.$ 
By induction, one uses (\ref{eq:commid}) to show that $[z^n,y_i]=[z,y_i]^n$ for all $1\leq i\leq k$ and $n\geq1.$

Since $[z,y_i]\in T(G)$ it follows that there is some power of $z$ such that $[z^d,y_i]=e$ for all $i=1, \ldots, k.$  In particular $x^{nd}=z^d\in Z(G).$ Therefore $G_f/Z(G)$ is a torsion group.  But $G_f/Z(G)$ is polycyclic and therefore finite.
\end{proof}
\begin{lemma} \label{lem:tfree} Let $G$ be a finitely generated nilpotent group. Then $G/G_f$ is torsion free.
\end{lemma}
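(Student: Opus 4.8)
The plan is to reduce to Lemma \ref{lem:tfreecase} by passing to the torsion-free quotient $G/T(G)$, exploiting that $T(G)$ is finite. As recalled above, $T(G)$ is a finite normal subgroup of $G$, so $G/T(G)$ is again a finitely generated nilpotent group, and it is torsion free. Lemma \ref{lem:tfreecase} therefore gives $(G/T(G))_f = Z(G/T(G))$, while Mal'cev's theorem \cite{Malcev49} (already invoked above) gives that $(G/T(G))/Z(G/T(G))$ is torsion free. Hence it suffices to prove $G_f / T(G) = (G/T(G))_f$, for then $G/G_f \cong (G/T(G))/(G_f/T(G)) = (G/T(G))/Z(G/T(G))$ is torsion free, as desired.

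To establish $G_f/T(G) = (G/T(G))_f$, I would show that an element $x \in G$ has finite conjugacy class in $G$ if and only if $xT(G)$ has finite conjugacy class in $G/T(G)$. The forward direction is immediate, since the quotient map carries the conjugacy class of $x$ onto that of $xT(G)$. For the converse, the conjugacy class of $x$ in $G$ is contained in the preimage, under the quotient map, of the conjugacy class of $xT(G)$; if the latter is finite this preimage meets only finitely many cosets of $T(G)$, and each coset is finite because $T(G)$ is, so the conjugacy class of $x$ is finite and $x \in G_f$. This yields the claimed equality of subgroups of $G/T(G)$.

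I do not anticipate a genuine obstacle: the entire content lies in the finiteness of $T(G)$, which allows the "finite conjugacy class" condition to pass up and down along the quotient by $T(G)$. If one prefers to avoid the coset bookkeeping, an alternative is to take $x$ with $x^m \in G_f$, observe that its image in $G/T(G)$ then lies in $(G/T(G))_f = Z(G/T(G))$ and hence, since $(G/T(G))/Z(G/T(G))$ is torsion free, that the image of $x$ itself is central in $G/T(G)$; this says $[x,y] \in T(G)$ for every $y \in G$, so the conjugacy class of $x$ is contained in the finite set $T(G)\,x$ and $x \in G_f$.
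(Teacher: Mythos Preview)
Your proposal is correct and follows essentially the same route as the paper: pass to the torsion-free quotient $G/T(G)$, use Mal'cev to see that $(G/T(G))/Z(G/T(G))$ is torsion free, and then exploit the finiteness of $T(G)$ to pull the conclusion back to $G$ (your alternative argument at the end is almost verbatim the paper's). The one minor difference is that the paper first invokes Lemma~\ref{lem:findex} to land a power of $x$ in $Z(G)$ before passing to $G/T(G)$, whereas you appeal to Lemma~\ref{lem:tfreecase} directly on $G/T(G)$; your version is a small streamlining, since Lemma~\ref{lem:findex} itself already goes through Lemma~\ref{lem:tfreecase}.
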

\begin{proof}  Suppose $x\in G\setminus G_f$ and $x^n\in G_f$ for some $n.$ By Lemma \ref{lem:findex} we have $x^m\in Z(G)$ for some $m.$  Since $G/T(G)$ is torsion free and $x^mT(G)\in Z(G/T(G))$ we have $xT(G)\in Z(G/T(G)).$ This means that for every $y\in G$ we have $yxy^{-1}x^{-1}\in T(G)$; equivalently, $yxy^{-1}=xz$ for some $z\in T(G).$  Since $T(G)$ is finite, this means $x\in G_f$, a contradiction.
\end{proof}

\section{Main result} \label{sec:main}
\begin{definition}
For a group $G$ and normalized positive definite function $\phi:G\rightarrow \C$, let $(\pi_\phi,H_\phi)$ denote the GNS representation of $G$ associated with $\phi.$ Let $C^*_{\pi_\phi}(G)$ denote the $C^*$-algebra generated by $\pi_\phi(G).$ A \textbf{trace} on $G$ is a normalized, positive definite function that is constant on conjugacy classes.  Notice that a trace $\tau$ on $G$ canonically induces a tracial state on $C^*(G),$ which we will also denote by the same symbol $\tau$.
\end{definition}
The following lemma is well-known and may be found for example in \cite[Proposition 4.1.9]{Brown08}.
\begin{lemma} Let $A$ be a C*-algebra and $\phi$ a faithful state on $A.$  Let $\alpha$ be an automorphism of $A.$  Let $u\in A\rtimes_\alpha \Z$ be the unitary implementing $\alpha$. One extends $\phi$ to a faithful state on $A\rtimes_\alpha\Z$ by setting $\phi(xu^n)=0$ when $x\in A$ and $n\neq0.$ \label{lem:faithstates} 
\end{lemma}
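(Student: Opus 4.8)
The plan is to realize the desired extension as $\phi\circ E$, where $E\colon A\rtimes_\alpha\Z\to A$ is the canonical conditional expectation, and then to deduce that this state is faithful from the faithfulness of $\phi$ together with the faithfulness of $E$. First I would recall the construction of $E$ from the dual action. Let $\widehat{\alpha}\colon\T\to\operatorname{Aut}(A\rtimes_\alpha\Z)$ be the action of $\T=\widehat{\Z}$ that fixes $A$ pointwise and sends $u\mapsto zu$ for $z\in\T$; it is point-norm continuous, so averaging against normalized Haar measure on $\T$ produces a unital completely positive $A$-bimodular map $E(b)=\int_\T\widehat{\alpha}_z(b)\,dz$ with range $A$. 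On the dense $*$-subalgebra of finite sums one computes $E\big(\sum_n x_nu^n\big)=x_0$; in particular $E(xu^n)=x\big(\int_\T z^n\,dz\big)=0$ for $n\neq 0$ and $E(x)=x$ for $x\in A$. (Since $\Z$ is amenable there is no ambiguity between the full and reduced crossed products here, so $E$ is defined on all of $A\rtimes_\alpha\Z$.)

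Next I would verify that $E$ is faithful. Suppose $b\in A\rtimes_\alpha\Z$ satisfies $E(b^*b)=0$, i.e.\ $\int_\T\widehat{\alpha}_z(b^*b)\,dz=0$. Each $\widehat{\alpha}_z(b^*b)$ is positive and the map $z\mapsto\widehat{\alpha}_z(b^*b)$ is norm continuous, so the vanishing of the integral of a continuous family of positive elements forces $\widehat{\alpha}_z(b^*b)=0$ for every $z\in\T$; taking $z=1$ gives $b^*b=0$, hence $b=0$. (Equivalently, one may simply invoke the standard faithfulness of the canonical expectation for crossed products by the amenable group $\Z$.)

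Finally, set $\psi=\phi\circ E$. As the composition of a state with a unital completely positive map, $\psi$ is a state on $A\rtimes_\alpha\Z$, and the formulas above give $\psi(xu^n)=0$ for $x\in A$ and $n\neq 0$, while $\psi|_A=\phi$; thus $\psi$ is precisely the extension described in the statement. For faithfulness, suppose $\psi(b^*b)=0$. Then $\phi\big(E(b^*b)\big)=0$ with $E(b^*b)\geq 0$, so faithfulness of $\phi$ yields $E(b^*b)=0$, and then faithfulness of $E$ yields $b=0$. The only step that requires genuine care is the faithfulness of $E$; the remainder is routine bookkeeping with the dual action, which is why the result can be cited rather than proved in detail.
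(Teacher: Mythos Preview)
The paper does not prove this lemma at all: it simply states that the result is well-known and cites \cite[Proposition 4.1.9]{Brown08}. Your argument is correct and is essentially the standard proof found in that reference---realize the extension as $\phi\circ E$ for the canonical conditional expectation $E$ obtained by averaging over the dual $\T$-action, and deduce faithfulness from that of $\phi$ and of $E$. One minor remark: your parenthetical about amenability of $\Z$ is not really needed for the approach you take, since the dual-action averaging already defines $E$ on the full crossed product and your integral argument for faithfulness works there directly; the amenability comment is harmless but slightly beside the point.
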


\begin{lemma} \label{lem:crossedproducts} Let $N$ be a discrete group and  $\alpha$ an automorphism of $N.$  Set $G=N\rtimes_\alpha \Z.$ Let $u\in G$ be the element implementing $\alpha$ by conjugation.
  Let $\tau$ be a trace on $G$ such that $\tau(x)=0$ for all $x\in G\setminus N.$ Then
  \begin{equation*}
  C^*_{\pi_\tau}(G)\cong C^*_{\pi_\tau}(N)\rtimes_{\textup{Ad}\pi_\tau(u)} \Z
  \end{equation*}
\end{lemma}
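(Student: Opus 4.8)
The plan is to realize $C^*_{\pi_\tau}(G)$ as a crossed product by exhibiting a concrete covariant pair inside it and then verifying that the induced $*$-homomorphism from the crossed product is faithful by a trace/conditional-expectation argument.

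First I would set up notation: write $G = N \rtimes_\alpha \Z$ with $u \in G$ the chosen element implementing $\alpha$, so every element of $G$ is uniquely $x u^n$ with $x \in N$, $n \in \Z$. Restricting $\pi_\tau$ to $N$ gives a representation whose generated C*-algebra is $C^*_{\pi_\tau}(N)$, and $\pi_\tau(u)$ is a unitary in $C^*_{\pi_\tau}(G)$ that implements $\mathrm{Ad}\,\pi_\tau(u)$ on $C^*_{\pi_\tau}(N)$ (since $\pi_\tau(u)\pi_\tau(x)\pi_\tau(u)^* = \pi_\tau(uxu^{-1}) = \pi_\tau(\alpha(x))$). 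Thus $(\pi_\tau|_N, \pi_\tau(u))$ is a covariant representation of the C*-dynamical system $(C^*_{\pi_\tau}(N), \Z, \mathrm{Ad}\,\pi_\tau(u))$, which by the universal property of the full crossed product yields a surjective $*$-homomorphism
\[
\Phi : C^*_{\pi_\tau}(N)\rtimes_{\mathrm{Ad}\,\pi_\tau(u)}\Z \longrightarrow C^*_{\pi_\tau}(G).
\]
The content of the lemma is that $\Phi$ is injective. Note $\Z$ is amenable, so the full and reduced crossed products coincide and we may use the reduced picture with its canonical faithful conditional expectation $E$ onto $C^*_{\pi_\tau}(N)$ given by $E(y u_{\mathrm{cp}}^n) = \delta_{n,0}\, y$.

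To prove injectivity I would compare $E$ with the analogous expectation on the target. The trace $\tau$ on $G$ restricts to a trace $\tau_N$ on $N$, which is faithful on $C^*_{\pi_\tau}(N)$ precisely because $\pi_\tau$ restricted to... more carefully, $\tau$ is a faithful state on $C^*_{\pi_\tau}(G)$ in the sense relevant here, and the hypothesis $\tau(x) = 0$ for $x \in G\setminus N$ says exactly that, under $\Phi$, the vector state $\tau$ pulls back to the state $\tau_N\circ E$ on the crossed product (evaluate both sides on the standard dense spanning set $\{xu^n\}$). By Lemma~\ref{lem:faithstates}, applied to the faithful state $\tau_N$ on $C^*_{\pi_\tau}(N)$ and the automorphism $\mathrm{Ad}\,\pi_\tau(u)$, the extension $\tau_N\circ E$ is a faithful state on $C^*_{\pi_\tau}(N)\rtimes_{\mathrm{Ad}\,\pi_\tau(u)}\Z$. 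Since $\tau_N\circ E = \tau\circ\Phi$ and $\tau$ is a state on $C^*_{\pi_\tau}(G)$, any $a$ in the kernel of $\Phi$ satisfies $(\tau_N\circ E)(a^*a) = \tau(\Phi(a^*a)) = 0$, forcing $a = 0$ by faithfulness. Hence $\Phi$ is an isomorphism.

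The main obstacle — really the only non-bookkeeping point — is establishing the faithfulness of the relevant state, i.e. that $\tau_N$ is a faithful state on $C^*_{\pi_\tau}(N)$ so that Lemma~\ref{lem:faithstates} applies, and that $\tau\circ\Phi$ genuinely equals $\tau_N\circ E$ rather than merely agreeing on a spanning set before completion; the latter requires checking that $\Phi$ intertwines the two expectations, which follows because $\Phi(u_{\mathrm{cp}}) = \pi_\tau(u)$ and $\Phi$ is $C^*_{\pi_\tau}(N)$-linear, so $\Phi$ maps the image of $E$ into $C^*_{\pi_\tau}(N)$ compatibly. Faithfulness of $\tau_N$ on $C^*_{\pi_\tau}(N)$ should come from the GNS construction: $\pi_\tau$ is (quasi-contained in, hence) such that the vector state associated to the cyclic vector is faithful on the generated von Neumann algebra, and one restricts attention to the C*-algebra; alternatively one observes $\tau$ itself is faithful on $C^*_{\pi_\tau}(G)$ and restriction of a faithful state to a C*-subalgebra is faithful. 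I would write out this last reduction carefully, as it is where the hypotheses on $\tau$ are actually used.
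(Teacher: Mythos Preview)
Your argument is correct and follows exactly the same route as the paper: build the surjection $\Phi$ from the crossed product via the covariant pair $(\pi_\tau|_N,\pi_\tau(u))$, then observe that $\tau\circ\Phi$ coincides with the canonical extension of the faithful trace on $C^*_{\pi_\tau}(N)$ described in Lemma~\ref{lem:faithstates}, so $\Phi$ is injective. The only difference is that you spell out the faithfulness of $\tau$ on $C^*_{\pi_\tau}(N)$ (which the paper takes for granted, as it follows from the GNS construction for a trace and restriction to a subalgebra).
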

\begin{proof} 
Let $\sigma:C^*_{\pi_\tau}(N)\rtimes_{\textup{Ad}\pi_\tau(u)}\Z\rightarrow C^*_{\pi_\tau}(G)$ denote the surjective *-homomorphism corresponding to the covariant representation which is the inclusion map on $C^*_{\pi_\tau}(N)$ and sends $n\in \Z$ to $\pi_\tau(u)^n.$
By Lemma \ref{lem:faithstates},  $\tau\circ \sigma$ defines a faithful tracial state on $C^*_{\pi_\tau}(N)\rtimes_{\textup{Ad}\pi_\tau(u)} \Z$.  Therefore  $\sigma$ is injective. 
\end{proof}

\begin{theorem} \label{thm:obs} Let $G$ be finitely generated and nilpotent.   Let $\tau$ be a trace on $G$ such that $\tau(x)=0$ for all $x\in G\setminus G_f.$    Then $C^*_{\pi_\tau}(G)$ is isomorphic to an iterated crossed product of $C^*_{\pi_\tau}(G_f)$ by $\Z$-actions, i.e.
\begin{equation}
C^*_{\pi_\tau}(G)\cong C^*_{\pi_\tau}(G_f)\rtimes \Z \rtimes \cdots \rtimes \Z ,\label{eq:4}
\end{equation}
and $C^*_{\pi_\tau}(G)$ satisfies the UCT.
\end{theorem}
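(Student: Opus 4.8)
The plan is to peel off one infinite cyclic factor at a time, working from $G$ down to $G_f$ and applying Lemma \ref{lem:crossedproducts} at each stage, then to appeal to closure of the UCT class under $\Z$-crossed products. First I would set up the group theory. The subgroup $G_f$ is characteristic, hence normal, and $G/G_f$ is a finitely generated nilpotent group which is torsion free by Lemma \ref{lem:tfree}; a finitely generated torsion-free nilpotent group is poly-$\Z$ (see \cite{Segal83}), so it carries a subnormal series with all quotients isomorphic to $\Z$. Pulling this back to $G$ produces a chain
\begin{equation*}
G = H_0 \trianglerighteq H_1 \trianglerighteq \cdots \trianglerighteq H_m = G_f , \qquad H_{i+1}\trianglelefteq H_i,\quad H_i/H_{i+1}\cong\Z .
\end{equation*}
Since $\Z$ is free each extension splits, so $H_i = H_{i+1}\rtimes_{\alpha_i}\Z$, where $\alpha_i$ is conjugation by a chosen $u_i\in H_i$ whose image generates $H_i/H_{i+1}$.

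Next I would iterate Lemma \ref{lem:crossedproducts} along this chain. Because $\tau$ is a trace, its GNS cyclic vector $\xi_\tau$ is separating for $\pi_\tau(C^*(G))''$, so $\tau$ is a faithful state on $C^*_{\pi_\tau}(G)$; compressing operators to the cyclic subspace $\overline{\pi_\tau(H)\xi_\tau}$ then identifies $C^*_{\pi_\tau}(H)$ with the C*-algebra of the GNS representation of $\tau|_H$, for every subgroup $H\leq G$ (surjectivity is clear, and injectivity follows from faithfulness of $\tau$). Moreover $H_i\setminus H_{i+1}\subseteq G\setminus G_f$, so $\tau|_{H_i}$ vanishes off $H_{i+1}$; hence Lemma \ref{lem:crossedproducts} applies with $(G,N)$ replaced by $(H_i,H_{i+1})$ and $\tau$ replaced by $\tau|_{H_i}$, and after the identifications just made it reads
\begin{equation*}
C^*_{\pi_\tau}(H_i)\;\cong\; C^*_{\pi_\tau}(H_{i+1})\rtimes_{\mathrm{Ad}\,\pi_\tau(u_i)}\Z .
\end{equation*}
Composing these isomorphisms for $i = 0,\dots,m-1$ yields (\ref{eq:4}).

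For the UCT statement, note that $C^*_{\pi_\tau}(G)$ is nuclear, being a quotient of $C^*(G)$ with $G$ amenable, so for it the UCT is equivalent to membership in the bootstrap class; and the bootstrap class is closed under crossed products by $\Z$ (a standard consequence of the Pimsner--Voiculescu exact sequence). By (\ref{eq:4}) it therefore suffices to show $C^*_{\pi_\tau}(G_f)$ lies in the bootstrap class. By Lemma \ref{lem:findex} the subgroup $Z(G)\leq G_f$ has finite index, so $Z(G_f)$ has finite index in $G_f$; thus $G_f$ is a finitely generated virtually abelian group, and $C^*(G_f)$ is type I (Thoma's theorem; concretely, it is a section algebra over $\widehat{Z(G_f)}$ of a bundle whose fibres are twisted group C*-algebras of the finite group $G_f/Z(G_f)$, hence subhomogeneous). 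Its quotient $C^*_{\pi_\tau}(G_f) = \pi_\tau(C^*(G_f))$ is then a separable type I C*-algebra, and separable type I C*-algebras lie in the bootstrap class; so $C^*_{\pi_\tau}(G_f)$, and therefore $C^*_{\pi_\tau}(G)$, satisfies the UCT.

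I do not expect a genuine obstacle here: the group theory is already packaged in Section \ref{sec:nilemmas}, and each crossed-product step is governed by Lemma \ref{lem:crossedproducts}. The one point demanding care is the bookkeeping in the second paragraph — verifying that compression to the cyclic subspace really does turn $C^*_{\pi_\tau}(H_i)$ into the GNS C*-algebra of $\tau|_{H_i}$ (this is exactly where faithfulness of the trace is used), so that Lemma \ref{lem:crossedproducts} can legitimately be re-applied at each stage of the chain rather than only at the top.
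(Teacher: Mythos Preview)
Your proposal is correct and follows essentially the same route as the paper: both use Lemma~\ref{lem:tfree} to get $G/G_f$ torsion free, build the poly-$\Z$ chain from $G$ down to $G_f$ via splitting of $\Z$-extensions, iterate Lemma~\ref{lem:crossedproducts}, and then invoke Lemma~\ref{lem:findex} to see that $C^*_{\pi_\tau}(G_f)$ is Type~I (the paper says ``subhomogeneous'', you say ``virtually abelian via Thoma'' --- same conclusion) before using closure of the UCT class under $\Z$-crossed products. Your second paragraph, identifying $C^*_{\pi_\tau}(H_i)$ with the GNS algebra of $\tau|_{H_i}$ via faithfulness of the trace, makes explicit a bookkeeping step the paper leaves implicit in the phrase ``repeated applications of Lemma~\ref{lem:crossedproducts}''.
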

\begin{proof}  By Lemma \ref{lem:tfree}, $G/G_f$ is torsion free. Since $G/G_f$ is finitely generated and nilpotent it is isomorphic to an iterated semi-direct product of $\Z$-actions. By repeatedly using the fact that the short exact sequence of groups  $0\rightarrow A \rightarrow B\rightarrow \Z\rightarrow 0$ always splits we have
\begin{equation*}
G\cong G_f\rtimes \Z \rtimes \cdots \rtimes\Z.
\end{equation*}
Now \eqref{eq:4} follows from repeated applications of Lemma \ref{lem:crossedproducts}. By Lemma \ref{lem:findex}, the group C*-algebra $C^*(G_f)$ is subhomogeneous, from which it follows that $C^*_{\pi_\tau}(G_f)$ is also subhomogeneous.  In particular, $C^*_{\pi_\tau}(G_f)$ is Type I, so it satisfies the UCT by \cite[Theorem 1.17]{Rosenberg87}.
Finally, \cite[Proposition 2.7, Theorem 4.1]{Rosenberg87} shows that the UCT is preserved by $\Z$-actions, implying that $C^*_{\pi_\tau}(G)$ satisfies the UCT.
\end{proof}

\begin{theorem} \label{thm:main} Let $G$ be a finitely generated nilpotent group and $\pi$ an irreducible representation of $G.$  Then $C^*_\pi(G)$ satisfies the UCT.
\end{theorem}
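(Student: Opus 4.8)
The plan is to deduce Theorem \ref{thm:main} from Theorem \ref{thm:obs}: starting from the irreducible representation $\pi$, I will manufacture a trace $\tau$ on $G$ that vanishes on $G\setminus G_f$ and whose GNS representation $\pi_\tau$ still generates $C^*_\pi(G)$, and then Theorem \ref{thm:obs} applies verbatim.

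First I would dispose of the case $\dim\pi<\infty$: there $C^*_\pi(G)\cong M_n(\C)$ is type I, so the UCT holds by \cite[Theorem 1.17]{Rosenberg87}; hence assume $\dim\pi=\infty$. Recall $C^*_\pi(G)$ is unital, simple \cite{Moore76} and nuclear \cite{Lance73}, and by prior work (the references in the introduction; these algebras are known to be stably finite) it admits a tracial state. Fix an extreme point $\phi$ of its tracial state space and set $\tau:=\phi\circ\pi$, a normalized positive definite class function on $G$, i.e.\ a trace on $G$. Since the GNS space of $\tau$ is canonically that of $\phi$, we get $\pi_\tau(G)''=\pi_\phi(C^*_\pi(G))''$ and $C^*_{\pi_\tau}(G)=\pi_\phi(C^*_\pi(G))$; because $\phi$ is extreme this von Neumann algebra is a factor, and because $C^*_\pi(G)$ is simple and $\pi_\phi\neq 0$ it follows that $C^*_{\pi_\tau}(G)\cong C^*_\pi(G)$, which is infinite dimensional. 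Thus $\pi_\tau(G)''$ is an infinite dimensional factor carrying a faithful trace, that is, a $\mathrm{II}_1$ factor: $\tau$ is the character of a $\mathrm{II}_1$ factor representation of $G$.

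The crucial input is then that characters of $\mathrm{II}_1$ factor representations of a finitely generated nilpotent group are supported on $G_f$ (this is the Thoma-type description of characters of nilpotent groups, going back to Howe and Carey--Moran; alternatively one argues directly, using that $\pi_\tau$ is scalar on $Z(G)$ and that $[G_f:Z(G)]<\infty$ by Lemma \ref{lem:findex}, that for $x\notin G_f$ the infinite conjugacy class of $x$ together with a commutator computation up the central series forces a scalar $\neq 1$ to multiply $\tau(x)$, so $\tau(x)=0$). Granting this, $\tau$ is a trace of exactly the kind handled by Theorem \ref{thm:obs}, so $C^*_{\pi_\tau}(G)$, hence $C^*_\pi(G)$, satisfies the UCT.

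I expect the genuinely substantial step to be the one that produces the trace supported on $G_f$: everything after it is formal (simplicity of $C^*_\pi(G)$ plus Theorem \ref{thm:obs}), and everything before it (the finite dimensional case, extracting an extreme trace) is routine. That step rests on two imported facts — existence of a tracial state on $C^*_\pi(G)$, and the fact that characters of infinite dimensional factor representations of finitely generated nilpotent groups vanish off $G_f$. A structural alternative, closer in spirit to the proof of Theorem \ref{thm:obs}, would be to use Clifford theory to realize $C^*_\pi(G)$ as a matrix algebra over a twisted group C*-algebra of a torsion free finitely generated nilpotent group, observe that such twisted group algebras are iterated crossed products of $\C$ by $\Z$-actions (the torsion freeness being supplied by Lemma \ref{lem:tfree}), and then quote \cite[Proposition 2.7, Theorem 4.1]{Rosenberg87} directly.
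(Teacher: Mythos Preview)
Your approach is essentially the paper's: produce an extreme trace $\tau$ with $C^*_{\pi_\tau}(G)\cong C^*_\pi(G)$, check that $\tau$ vanishes off $G_f$, and invoke Theorem~\ref{thm:obs}. The paper obtains $\tau$ by citing \cite[Proposition 5.1]{Carey84} rather than by extracting an extreme tracial state from the simple C*-algebra $C^*_\pi(G)$, but that is cosmetic.

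There is, however, a genuine gap. The assertion that characters of $\mathrm{II}_1$ factor representations of a finitely generated nilpotent group are supported on $G_f$ is \emph{false} without a faithfulness hypothesis. Take $G=H\times H$ with $H$ the integer Heisenberg group, so that $G_f=Z(H)\times Z(H)$. Let $\bar\tau$ be the extreme trace on $H$ attached to a faithful irreducible representation of $H$ (a $\mathrm{II}_1$ character), and set $\tau((h_1,h_2))=\bar\tau(h_2)$. Then $\tau$ is an extreme trace on $G$ with $\pi_\tau(G)''$ a $\mathrm{II}_1$ factor, yet $\tau((h_1,e))=1$ for every $h_1\in H$, so $\tau$ certainly does not vanish off $G_f$. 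Here $\ker\pi=H\times\{e\}\not\subseteq G_f$, and the ``scalar $\neq 1$'' you hope to extract from a commutator computation is identically $1$, because the relevant commutators lie in $\ker\pi_\tau$. The Carey--Moran result you are invoking (\cite[Theorem 4.5]{Carey84}) is stated for extreme traces satisfying $\tau(g)=1\Rightarrow g=e$, i.e., for faithful $\pi$.

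The fix is the one-line reduction the paper makes explicit: first replace $G$ by $G/\ker\pi$ and $\pi$ by the induced faithful representation $\widetilde\pi$, so that $C^*_{\widetilde\pi}(G/\ker\pi)\cong C^*_\pi(G)$, and then run your argument. With faithfulness in hand, your route to $\tau$ via an extreme tracial state on $C^*_\pi(G)$ is a perfectly good alternative to the paper's direct citation of \cite[Proposition 5.1]{Carey84}.
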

\begin{proof} Suppose first that $\pi$ is faithful on $G.$ Since $G$ is finitely generated, by \cite[Proposition 5.1]{Carey84} there is an extreme trace $\tau$ on $G$ such that $C^*_\pi(G)\cong C^*_{\pi_\tau}(G).$ Since $\tau$ is an extreme trace and $\pi$ is faithful on $G$, it follows from \cite[Theorem 4.5]{Carey84} that $\tau(g)=0$ if $g\not\in G_f.$ By Theorem \ref{thm:obs}, $C^*_{\pi_\tau}(G)$ satisfies the UCT.

If $\pi$ is not faithful, then we replace $G$ with $G/\textup{ker}(\pi)$ and $\pi$ with $\widetilde{\pi}(g\textup{ker}(\pi)):=\pi(g)$.  Then $\widetilde{\pi}$ is faithful and $C^*_{\widetilde{\pi}}(G /\text{ker}(\pi)) \cong C^*_\pi(G)$, so we simply apply the above proof to $(G/\text{ker}(\pi), \widetilde{\pi})$.
\end{proof}

\begin{corollary} Let $G$ be a finitely generated nilpotent group and $\pi$ an irreducible representation of $G.$ Then $C^*_\pi(G)$ is classified by its ordered K-theory within the class of simple, unital, nuclear C*-algebras with finite nuclear dimension that satisfy the universal coefficient theorem.
\end{corollary}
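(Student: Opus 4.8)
The plan is to check that $C^*_\pi(G)$ belongs to the class $\mathcal{C}$ and then feed it into the Tikuisis--White--Winter classification theorem. First I would dispatch the soft hypotheses: $C^*_\pi(G)$ is unital because $\pi$ is a unitary representation, so $\pi(e)$ is the unit, and it is separable because a finitely generated group is countable, whence $C^*(G)$ and its quotient $C^*_\pi(G)$ are separable. Simplicity is due to Moore \cite{Moore76} and nuclearity to Lance \cite{Lance73}, while finite nuclear dimension is the main theorem of \cite{Eckhardt14b} (building on the long list of results surveyed in its introduction). Together with Theorem \ref{thm:main}, which supplies the UCT, this shows $C^*_\pi(G)\in\mathcal{C}$, so by \cite[Corollary D]{Tikuisis15} it is determined up to isomorphism by its Elliott invariant among algebras in $\mathcal{C}$.

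Second, I would explain why, for these particular algebras, the Elliott invariant is already detected by the ordered $K$-theory $(K_0, K_0^+, [1]_0, K_1)$. The key point is that $C^*_\pi(G)$ has a unique tracial state: by \cite{Carey84} one may arrange $C^*_\pi(G) \cong C^*_{\pi_\tau}(G)$ for an extreme trace $\tau$, so that $\pi_\tau(G)''$ is a finite factor and $\tau$ is the only trace, as recorded around \cite[Theorem 4.5]{Eckhardt14b}. Since finite nuclear dimension forces $\mathcal{Z}$-stability and hence strict comparison in this simple, separable, unital, nuclear setting, the positive cone is $K_0^+\setminus\{0\}=\{x : \tau_*(x)>0\}$; a short argument comparing an element of $K_0$ with rational multiples of $[1]_0$ then shows that this order data pins down $\tau_*$ as the \emph{unique} state of $(K_0,K_0^+,[1]_0)$. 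Hence the tracial simplex is a point and the pairing in the Elliott invariant contributes nothing beyond the ordered $K$-theory; this reduction is exactly the content of \cite[Theorem 4.5]{Eckhardt14b} once the UCT is available. Combining the two paragraphs yields the corollary.

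There is no genuine obstacle remaining here: the substantive mathematical input is the UCT, that is, Theorem \ref{thm:main} (itself reduced, via Theorems \ref{thm:obs} and the nilpotent lemmas of Section \ref{sec:nilemmas}, to the Type I case plus stability of the UCT under $\Z$-actions \cite{Rosenberg87}). Everything in the corollary beyond that is assembly of hypotheses together with citation of the deep external results \cite{Tikuisis15} and \cite{Eckhardt14b}. Accordingly, the write-up should be short — a sentence listing the six membership conditions for $\mathcal{C}$, a sentence invoking \cite[Corollary D]{Tikuisis15}, and a sentence pointing to \cite[Theorem 4.5]{Eckhardt14b} for the passage from the Elliott invariant to ordered $K$-theory.
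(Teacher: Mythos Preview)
Your proposal is correct and follows exactly the paper's approach: the paper's proof is the single sentence ``This is Theorem \ref{thm:main} combined with \cite[Theorem 4.5]{Eckhardt14b},'' and your write-up simply unpacks the content of that citation (membership in $\mathcal{C}$, monotraciality, and the reduction from the Elliott invariant to ordered $K$-theory) before invoking \cite[Corollary D]{Tikuisis15}. Your suggested final write-up---three sentences pointing to the relevant references---matches the paper's level of terseness.
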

\begin{proof} This is Theorem \ref{thm:main} combined with \cite[Theorem 4.5]{Eckhardt14b}.
\end{proof}

\section{A structure theorem for $C^*_\pi(G)$}
In the course of proving Theorem  \ref{thm:main} we discovered a structure theorem (Theorem \ref{thm:structure}) for $C^*_\pi(G)$ that is most likely unknown.  We therefore include it as it may be of interest to those working with twisted group $C^*$-algebras and representation theory.

Let $G$ be a discrete amenable group and $N\leq Z(G).$  Let $\omega\in \widehat{N}$ (the dual group of $N$) and also denote by $\omega$ the trivial extension of $\omega$ to $G$ (i.e., $\omega(x)=0$ for $x\not\in N.$)  
Let $c:G/N\rightarrow G$ be a choice of coset representatives. For $t\in G$, let $t_\omega\in H_\omega$ denote the canonical image of $t$ associated with the GNS representation. 

Recall that, for any $s,t\in G,$ we have $\la t_\omega,s_\omega \ra=\omega(s^{-1}t).$ Hence if $s^{-1}t\not\in N$, the vectors $s_\omega$ and $t_\omega$ are orthogonal.  On the other hand, for $s\in G$ we have $\la s_\omega, c(sN)_\omega  \ra=\omega(c(sN)^{-1}s)\in \T$, from which it follows that 
\begin{equation}
s_\omega=\omega(c(sN)^{-1}s)c(sN)_\omega. \label{eq:proportional}
\end{equation}

We deduce that $\{ s_\omega: s\in c(G/N) \}$ is an orthonormal basis for $H_\omega.$ 
 Define $W:H_\omega\rightarrow \ell^2(G/N)$ by  $W(s_\omega)=\delta_{sN}$ for $s\in c(G/N).$   Then $W$ is unitary.  Moreover, by (\ref{eq:proportional}) we have that, for any $y\in G$ and $s\in c(G/N)$, 
 \begin{equation}
W\pi_\omega(y)W^*(\delta_{sN})=\omega(c(ysN)^{-1}yc(sN))\delta_{ysN}. \label{eq:twist}
\end{equation}
Now (as in \cite{Packer92}) define the 2-cocycle $\sigma:G/N\times G/N\rightarrow \T$ by
\begin{equation}
\sigma(xN,yN)=\omega(c(xN)c(yN)c(xyN)\inv). \label{eq:cocycle}
\end{equation}
By an obvious adaptation of the proof of \cite[Proposition 2.9]{Eckhardt14a} and the discussion preceding it,
we have the following
\begin{lemma} \label{lem:twistedgroup} Let $G$ be a discrete amenable group and $N\leq Z(G).$ Let $\omega\in \widehat{N}.$  Also let $\omega$ denote the trivial extension to $G$ and define $\sigma$ as in (\ref{eq:cocycle}).  Then $C^*_{\pi_\omega}(G)$ is isomorphic to the twisted group $C^*$-algebra $C^*(G/N,\sigma).$  
\end{lemma}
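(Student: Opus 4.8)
The plan is to exhibit an explicit isomorphism $C^*_{\pi_\omega}(G)\cong C^*(G/N,\sigma)$ by conjugating the GNS representation $\pi_\omega$ through the unitary $W\colon H_\omega\to\ell^2(G/N)$ already constructed in the discussion above, and then recognizing the resulting operators as the generators of the left $\sigma$-regular representation of $G/N$. First I would record that, by formula \eqref{eq:twist}, the unitaries $V_y:=W\pi_\omega(y)W^*$ act on $\ell^2(G/N)$ by $V_y\delta_{sN}=\omega\bigl(c(ysN)\inv y\,c(sN)\bigr)\delta_{ysN}$; in particular $V_y$ depends only on the coset $yN$, since for $n\in N\leq Z(G)$ a direct computation with the cocycle identity for $\omega$ (or just the multiplicativity of $\omega$ on $N$) shows $V_{yn}=\omega(n)V_y$ acts the same way up to the scalar already absorbed into the basis. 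So $W(\cdot)W^*$ intertwines $C^*_{\pi_\omega}(G)$ with the C*-algebra on $\ell^2(G/N)$ generated by $\{V_y:y\in G\}=\{V_{c(xN)}:xN\in G/N\}$.

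Next I would identify these operators with the $\sigma$-twisted left regular representation. Writing $\lambda^\sigma_{xN}$ for the canonical unitary in $C^*(G/N,\sigma)$ (equivalently $\lambda^\sigma_{xN}\delta_{sN}=\sigma(xN,sN)\delta_{xsN}$ on $\ell^2(G/N)$), one checks directly from \eqref{eq:cocycle} that $V_{c(xN)}\delta_{sN}=\omega\bigl(c(xsN)\inv c(xN)c(sN)\bigr)\delta_{xsN}=\sigma(xN,sN)\delta_{xsN}$, so $V_{c(xN)}=\lambda^\sigma_{xN}$ on the nose. Hence $W(\cdot)W^*$ carries $C^*_{\pi_\omega}(G)$ onto $C^*_r(G/N,\sigma)$, and since $G/N$ is amenable (being a quotient of the amenable group $G$) the reduced and full twisted group C*-algebras coincide, giving $C^*_{\pi_\omega}(G)\cong C^*(G/N,\sigma)$. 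Throughout, the claim that $\pi_\omega$ is genuinely the GNS representation of the state $g\mapsto\omega(g)$ on $C^*(G)$ — so that $C^*_{\pi_\omega}(G)$ makes sense as written — is what lets us invoke the orthonormal basis computation preceding the lemma.

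The only real subtlety, and the point where "an obvious adaptation of \cite[Proposition 2.9]{Eckhardt14a}" must be made precise, is the well-definedness and cocycle property of $\sigma$: one must verify that $\sigma$ as defined in \eqref{eq:cocycle} actually takes values in $\T$ (immediate, since $c(xN)c(yN)c(xyN)\inv\in N$ as $N\leq Z(G)$ and $c$ is a section) and satisfies the 2-cocycle identity $\sigma(yN,zN)\sigma(xN,yzN)=\sigma(xyN,zN)\sigma(xN,yN)$, which is a routine manipulation using centrality of $N$ and the homomorphism property of $\omega$ on $N$ — this is exactly the standard computation in \cite{Packer92} and I would simply cite it. The remaining work is bookkeeping: confirming that the map $xN\mapsto V_{c(xN)}$ is a $\sigma$-projective unitary representation (again forced by \eqref{eq:twist} and the definition of $\sigma$), and that it is, up to unitary equivalence via $W$, independent of the choice of section $c$ — though for the statement of the lemma a single fixed choice suffices. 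I expect the main obstacle is purely expository: being careful about which convention for the twisted regular representation and for $\sigma$ one uses, so that the scalars in \eqref{eq:twist} and \eqref{eq:cocycle} match exactly rather than differing by a conjugate or an inverse.
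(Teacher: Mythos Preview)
Your proposal is correct and follows precisely the approach the paper intends: the paper itself gives no proof beyond pointing to the discussion preceding the lemma (the unitary $W$, formula \eqref{eq:twist}, and the cocycle \eqref{eq:cocycle}) together with \cite[Proposition 2.9]{Eckhardt14a}, and you have filled in exactly those details, including the amenability step identifying $C^*_r(G/N,\sigma)$ with $C^*(G/N,\sigma)$. Your remark about conventions is apt but, as you note, purely expository; the computation $V_{c(xN)}=\lambda^\sigma_{xN}$ goes through because $N$ is central, so $c(xN)c(sN)c(xsN)^{-1}=c(xsN)^{-1}c(xN)c(sN)$.
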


Let $\lambda_{G/N}$ denote the left regular representation of $G$ on $\ell^2(G/N).$
\begin{lemma} Let $G$ be a discrete amenable group and $N\leq Z(G).$  Let $\omega\in \widehat{N}$ and denote by $\omega$ the trivial extension of $\omega$ to $G.$ Let $\tau$ be a trace on $G$ such that $\tau|_N=\omega|_N.$
Then $\lambda_{G/N}\otimes \pi_\tau$ is unitarily equivalent to $\pi_\omega\otimes 1_{H_\tau}.$ 

Writing $\prec$ to denote weak containment, it then follows that $\pi_\tau \prec \pi_\omega$; equivalently, $C^*_{\pi_\tau}(G)$ is a quotient of $C^*_{\pi_\omega}(G).$ \label{lem:weakcontainment}
\end{lemma}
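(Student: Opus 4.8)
The plan is to prove the unitary equivalence by an explicit twisted Fell-absorption argument built on the concrete model (\ref{eq:twist}) for $\pi_\omega$, and then to extract the weak containment from amenability. The first ingredient is the observation that the hypothesis $\tau|_N=\omega|_N$ forces $\pi_\tau$ to be scalar on $N$: if $\xi$ is the cyclic GNS vector for $\tau$, then for $n\in N$ we have $|\la\pi_\tau(n)\xi,\xi\ra|=|\tau(n)|=|\omega(n)|=1=\|\pi_\tau(n)\xi\|\,\|\xi\|$, so equality in the Cauchy--Schwarz inequality gives $\pi_\tau(n)\xi=\omega(n)\xi$; since $N\le Z(G)$ and $\xi$ is cyclic, this propagates to $\pi_\tau(n)=\omega(n)1_{H_\tau}$ for all $n\in N$. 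This is the only place where the hypotheses on $N$ and on the agreement of $\tau$ with $\omega$ are genuinely used.

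Next I would write $\rho:=W\pi_\omega(\cdot)W^*$, the representation of $G$ on $\ell^2(G/N)$ described by (\ref{eq:twist}), namely $\rho(y)\delta_{sN}=\omega\big(c(ysN)\inv y\,c(sN)\big)\delta_{ysN}$ for $s\in c(G/N)$, and define a unitary $U$ on $\ell^2(G/N)\otimes H_\tau$ by
\[
U(\delta_{sN}\otimes v)=\delta_{sN}\otimes\pi_\tau\big(c(sN)\inv\big)v .
\]
Since $c(sN)$ depends only on the coset $sN$, this $U$ is a direct sum over the cosets $sN$ of the unitaries $1\otimes\pi_\tau(c(sN)\inv)$, hence unitary. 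A short computation gives $U(\lambda_{G/N}\otimes\pi_\tau)(y)U^*(\delta_{sN}\otimes v)=\delta_{ysN}\otimes\pi_\tau\big(c(ysN)\inv y\,c(sN)\big)v$; since $c(ysN)\inv y\,c(sN)\in N$, the scalarity from the previous paragraph rewrites this as $\omega\big(c(ysN)\inv y\,c(sN)\big)\delta_{ysN}\otimes v=(\rho(y)\otimes 1_{H_\tau})(\delta_{sN}\otimes v)$. Thus $U$ intertwines $\lambda_{G/N}\otimes\pi_\tau$ with $\rho\otimes 1_{H_\tau}$, and conjugating by $W\otimes 1_{H_\tau}$ yields $\lambda_{G/N}\otimes\pi_\tau\cong\pi_\omega\otimes 1_{H_\tau}$.

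For the weak containment, since $G$ is amenable so is $G/N$, hence the trivial representation of $G/N$ is weakly contained in the regular representation of $G/N$; inflating along $G\twoheadrightarrow G/N$ gives $1_G\prec\lambda_{G/N}$. Weak containment is preserved under tensoring with a fixed representation, so $\pi_\tau\cong 1_G\otimes\pi_\tau\prec\lambda_{G/N}\otimes\pi_\tau\cong\pi_\omega\otimes 1_{H_\tau}$; since the amplification $\pi_\omega\otimes 1_{H_\tau}$ is weakly equivalent to $\pi_\omega$, transitivity of $\prec$ gives $\pi_\tau\prec\pi_\omega$. Passing to integrated forms on $C^*(G)$, this means $\ker\pi_\omega\subseteq\ker\pi_\tau$, so $C^*_{\pi_\tau}(G)\cong C^*(G)/\ker\pi_\tau$ is a quotient of $C^*(G)/\ker\pi_\omega\cong C^*_{\pi_\omega}(G)$. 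The main obstacle is really the first paragraph: recognizing that matching $\tau$ with $\omega$ on the central subgroup $N$ is exactly the input that collapses $\pi_\tau|_N$ to scalars and thereby lets the Fell-type unitary $U$ absorb the cocycle twist in (\ref{eq:twist}); the remaining steps are routine coset bookkeeping and a standard amenability argument.
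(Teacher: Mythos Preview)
Your proof is correct and follows essentially the same Fell-absorption approach as the paper: the paper's unitary is $U(\delta_{tN}\otimes\xi)=\delta_{tN}\otimes\pi_\tau(c(tN))\xi$, the inverse of yours, and the computation and amenability argument are otherwise identical. Your Cauchy--Schwarz justification that $\pi_\tau|_N$ is scalar is more explicit than the paper's one-line appeal to ``$\tau=\omega$ on $N$, both are multiplicative on $N$,'' but the underlying reasoning is the same.
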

\begin{proof} This is a slight modification of the proof of Fell's absorption principle.  Define a unitary $U$ on $\ell^2(G/N)\otimes H_\tau$ by $U(\delta_{tN}\otimes \xi)=\delta_{tN}\otimes \pi_\tau(c(tN))\xi.$ For any $y,t\in G$,
\begin{align*}
U^*(\lambda_{G/N}(y)\otimes \pi_\tau(y))U(\delta_{tN}\otimes \xi)&=U^*(\lambda_{G/N}(y)\otimes \pi_\tau(y))(\delta_{tN}\otimes \pi_\tau(c(tN))\xi)\\
&=U^*(\delta_{ytN}\otimes \pi_\tau(yc(tN))\xi)\\
&= \delta_{ytN}\otimes \pi_\tau(c(ytN)^{-1}yc(tN))\xi\\
&=\omega(c(ytN)^{-1}yc(tN))\delta_{ytN}\otimes\xi,
\end{align*}
because $\tau=\omega$ on $N$, both are multiplicative on $N$, and $c(ytN)^{-1}yc(tN)\in N.$ Unitary equivalence now follows from (\ref{eq:twist}). 
\\\\
We now show weak containment. Let $1_G$ denote the trivial representation of $G$  on $\C$. Since $G/N$ is amenable,  $\lambda_{G/N}$ contains an approximately fixed vector, and thus $1_G\prec \lambda_{G/N}.$  Then
\begin{equation*}
\pi_\tau\sim 1_G\otimes \pi_\tau\prec \lambda_{G/N}\otimes \pi_\tau\sim \pi_\omega\otimes 1_{H_\tau}\prec \pi_\omega. 
\end{equation*}\end{proof}
The following is well-known (see for example \cite[Corollary 2.5.12]{Brown08}).
\begin{lemma} \label{lem:condexp} Let $H\leq G$ be discrete groups. Let $C^*_r(G)$ denote the reduced group C*-algebra of $G.$ The  linear map from $\C[G]$ to $\C[H]$ defined by $E(\lambda_s)=\lambda_s$ if $s\in H$ and $E(\lambda_s)=0$ if $s\not\in H$ extends to a conditional expectation from $C^*_r(G)$ onto $C^*_r(H).$ In particular, if $\omega$ is a tracial state on $C^*_r(G)$ such that $\omega(\lambda_s)=0$ when $s\not\in H$, then $\omega\circ E=\omega.$
\end{lemma}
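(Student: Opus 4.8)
The statement to prove is Lemma \ref{lem:condexp}: the existence of a conditional expectation $E\colon C^*_r(G)\to C^*_r(H)$ determined by $E(\lambda_s)=\lambda_s$ for $s\in H$ and $E(\lambda_s)=0$ otherwise, together with the consequence that any tracial state $\omega$ on $C^*_r(G)$ vanishing off $H$ satisfies $\omega\circ E=\omega$.

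The plan is to realize everything concretely on $\ell^2(G)$. First I would decompose $G$ into right cosets of $H$, say $G=\bigsqcup_{i\in I} Hg_i$ with $g_0=e$, so that $\ell^2(G)=\bigoplus_{i\in I}\ell^2(Hg_i)$, and each summand $\ell^2(Hg_i)$ is invariant under the left regular representation $\lambda$ of $G$ restricted to $H$; in fact $\ell^2(Hg_i)\cong\ell^2(H)$ as $H$-representations via right translation by $g_i$. Let $P\colon\ell^2(G)\to\ell^2(H)$ be the orthogonal projection onto the $i=0$ summand. The natural candidate for the conditional expectation is $E(x)=PxP$, viewed as an operator on $\ell^2(H)$; this is completely positive, unital, and contractive, being a compression. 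I would then check on the generators that $PxP$ lands in $C^*_r(H)$ and agrees with the prescribed formula: for $x=\lambda_s$ with $s\in G$, a direct computation of matrix coefficients $\langle P\lambda_s P\,\delta_t,\delta_{t'}\rangle$ for $t,t'\in H$ gives $\delta_{s t,t'}$ when $s\in H$ and $0$ when $s\notin H$ (since then $st\notin H$), so $P\lambda_s P=\lambda_s\in C^*_r(H)$ if $s\in H$ and $P\lambda_s P=0$ otherwise. By linearity and norm-continuity $E$ maps all of $C^*_r(G)$ into $C^*_r(H)$, and restricted to $C^*_r(H)\subseteq B(\ell^2(H))$ it is the identity, so $E$ is a genuine conditional expectation. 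The $H$-bimodule property $E(axb)=aE(x)b$ for $a,b\in C^*_r(H)$ follows from the fact that $P$ commutes with $\lambda_h$ for $h\in H$, or alternatively from Tomiyama's theorem once one knows $E$ is a unital contractive projection onto the subalgebra.

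For the second assertion, suppose $\omega$ is a tracial state on $C^*_r(G)$ with $\omega(\lambda_s)=0$ for $s\notin H$. Both $\omega$ and $\omega\circ E$ are states on $C^*_r(G)$, so by density of $\C[G]$ and continuity it suffices to check they agree on each $\lambda_s$. If $s\in H$ then $E(\lambda_s)=\lambda_s$ and the two agree trivially; if $s\notin H$ then $E(\lambda_s)=0$, so $\omega(E(\lambda_s))=0=\omega(\lambda_s)$ by hypothesis. Hence $\omega\circ E=\omega$ on the dense $*$-subalgebra, and therefore everywhere.

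I do not expect any serious obstacle here; this is a standard fact and the only point requiring a little care is verifying that the compression $PxP$ actually takes values in $C^*_r(H)$ rather than merely in $B(\ell^2(H))$, which is handled by the generator computation above together with norm-continuity of compression. Since the paper explicitly cites \cite[Corollary 2.5.12]{Brown08}, one could alternatively simply invoke that reference; I would include the short argument above for completeness, as it also makes the bimodule and trace-compatibility properties transparent.
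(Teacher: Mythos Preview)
Your proposal is correct and is in fact the standard argument found in the cited reference \cite[Corollary 2.5.12]{Brown08}. The paper itself offers no proof beyond that citation, so your write-up supplies strictly more detail than the paper; the compression-by-$P$ construction and the density argument for $\omega\circ E=\omega$ are exactly what one would expect.
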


\begin{lemma} \label{lem:repcondexp} Let $H\leq G$ be amenable discrete groups.  Let $\tau$ be a trace on $G$ that vanishes on $G\setminus H.$ Let  $E:C^*(G)\rightarrow C^*(H)$ be the conditional expectation from Lemma \ref{lem:condexp}.
The map $E_\tau:C_{\pi_\tau}^*(G)\rightarrow C^*_{\pi_\tau}(H)$ given by $E_\tau(\pi_\tau(x))=\pi_\tau(E(x))$ extends to a well-defined $\tau$-preserving conditional expectation onto $C^*_{\pi_\tau}(H).$ 
\end{lemma}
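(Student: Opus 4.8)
The plan is to push the conditional expectation $E\colon C^*(G)\to C^*(H)$ of Lemma~\ref{lem:condexp} through the GNS representation $\pi_\tau$, and the key point is that the data needed to do this is exactly encoded in the trace $\tau$. First I would observe that $C^*_{\pi_\tau}(G)$ is canonically a quotient of $C^*(G)$: the universal property of the full group $C^*$-algebra gives a surjective $*$-homomorphism $q\colon C^*(G)\to C^*_{\pi_\tau}(G)$ with $q(\lambda_s)=\pi_\tau(s)$, and similarly $q_H\colon C^*(H)\to C^*_{\pi_\tau}(H)$. So the assertion is that there is a conditional expectation making the square with $q$, $q_H$, $E$ and the sought-after $E_\tau$ commute.

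The heart of the matter is showing $E_\tau$ is well-defined, i.e.\ that $\ker q \subseteq \ker (q_H\circ E)$. For this I would use that $\tau$, being a trace on $G$ that vanishes off $H$, induces a tracial state on $C^*(G)$ (also called $\tau$) with $\tau(\lambda_s)=0$ for $s\notin H$; by the last sentence of Lemma~\ref{lem:condexp} we have $\tau\circ E=\tau$. Now the GNS representation of $C^*(G)$ associated to $\tau$ is precisely (a subrepresentation generated by, or rather unitarily equivalent on the cyclic subspace to) $\pi_\tau$, so $\ker q$ is contained in the GNS ideal $\{a : \tau(b^*a^*ac)=0 \text{ for all } b,c\}$ — here I would invoke that $\tau$ is a trace, so the left kernel and the ideal agree and $\ker q = \{a\in C^*(G): \tau(a^*a)=0\}$ when $\pi_\tau$ is the GNS representation. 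Wait — $\pi_\tau$ is the GNS representation of the \emph{group}, and $C^*_{\pi_\tau}(G)$ is generated by $\pi_\tau(G)$, so $\ker q$ is the ideal of elements killed by $\pi_\tau$, which (since $\tau(a^*a)=\langle \pi_\tau(a)\xi_\tau, \pi_\tau(a)\xi_\tau\rangle$ and $\xi_\tau$ is cyclic, and $\tau$ is tracial) is exactly $N_\tau := \{a : \tau(a^*a)=0\}$. Then for $a\in N_\tau$: $\tau\big((E(a))^*E(a)\big) = \tau\big(E(a^*)E(a)\big)$; using that $E$ is a conditional expectation one has $E(a^*)E(a) \le E(a^*a)$ (Kadison–Schwarz), so $\tau(E(a)^*E(a)) \le \tau(E(a^*a)) = (\tau\circ E)(a^*a) = \tau(a^*a) = 0$. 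Hence $E(a)\in N_\tau\cap C^*(H) = \ker q_H$, so $q_H(E(a))=0$. This gives well-definedness of $E_\tau$ on $C^*_{\pi_\tau}(G)$, and $E_\tau$ is completely positive and contractive as a quotient of the completely positive contractive map $q_H\circ E$.

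Finally I would check the remaining properties: $E_\tau$ is idempotent with range $C^*_{\pi_\tau}(H)$ (clear, since $E$ is and $q$ is surjective), it is a bimodule map over $C^*_{\pi_\tau}(H)$ (inherited from $E$ via $q$), so by Tomiyama's theorem it is a genuine conditional expectation; and it is $\tau$-preserving because $\tau\circ E_\tau \circ q = \tau \circ q_H \circ E = \tau\circ E = \tau = \tau\circ q$ on $C^*(G)$, and $q$ is surjective, so $\tau\circ E_\tau = \tau$ on $C^*_{\pi_\tau}(G)$. I expect the only genuine obstacle to be the identification of $\ker q$ with the trace-kernel $N_\tau$ — that is, verifying carefully that the $C^*$-algebra generated by $\pi_\tau(G)$ really is the GNS completion of $(C^*(G),\tau)$ rather than something larger, which uses cyclicity of $\xi_\tau$ for $\pi_\tau(G)$ together with the trace property; everything after that is the standard Kadison–Schwarz/Tomiyama bookkeeping.
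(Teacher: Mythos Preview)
Your proposal is correct. The well-definedness argument is identical to the paper's: both identify $\ker\pi_\tau$ with $\{x:\tau(x^*x)=0\}$ using the trace property and cyclicity of the GNS vector, and then use the Kadison--Schwarz inequality $E(x)^*E(x)\le E(x^*x)$ together with $\tau\circ E=\tau$ to push elements of this kernel through $E$. The difference lies in the contractivity step. The paper argues hands-on in the Hilbert space: it builds the orthogonal projection $P(x_\tau)=E(x)_\tau$ on $H_\tau$ and computes $\|\pi_\tau(E(x))\|$ as a supremum over unit vectors coming from $C^*(H)$, essentially reproducing the finite--von Neumann algebra construction of a trace-preserving conditional expectation. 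Your route is more categorical: $q_H\circ E$ is a contractive completely positive map that vanishes on the closed ideal $\ker q$, and any such map descends to a contractive completely positive map on the quotient $C^*_{\pi_\tau}(G)$ (contractivity because $\|E_\tau(q(x))\|=\|q_H(E(x+i))\|\le\|x+i\|$ for every $i\in\ker q$). Your argument is shorter and avoids the Hilbert-space computation; the paper's is more self-contained and makes the link to the von Neumann algebra picture explicit. One minor remark: you do not need to invoke Tomiyama at the end --- you have already verified idempotence, contractivity, complete positivity, and the bimodule property directly, which together constitute the definition of a conditional expectation.
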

\begin{proof}

Since ${\tau}$ is a tracial state, for any $x \in C^*(G)$, we have $\pi_\tau(x)=0$ if and only if ${\tau}(x^*x)=0.$
Consequently, $\pi_\tau(x)=0$ implies ${\tau}(E(x)^*E(x))\leq \tau(E(x^*x))=\tau(x^*x)=0,$ so $\pi_\tau(E(x))=0.$  Therefore $E_\tau$ is well-defined and $\tau$-preserving by Lemma \ref{lem:condexp}.
\\\\
The map $E_\tau$ is clearly idempotent so we only need to check it is contractive. This proceeds as in the case of building conditional expectations in the arena of finite von Neumann algebras. We include the proof for the convenience of the reader.
\\\\
For each $x\in C^*(G)$, let $x_\tau\in H_\tau$ be the canonical image of $x.$ Since $E$ is $\tau$-preserving, the map $P(x_\tau):=E(x)_\tau$ extends to an orthogonal projection on $H_\tau.$  Since $E(x)\in C^*(H)$,
\begin{align*}
\V  \pi_\tau(E(x)) \V&=\sup_{ y,z\in C^*(H), {\tau}(y^*y)={\tau}(z^*z)=1}|\la \pi_\tau(E(x))y_\tau,z_\tau  \ra|\\
&=\sup_{ y,z\in C^*(H),{\tau}(y^*y)={\tau}(z^*z)=1}|{\tau}(z^*E(x)y)|\\
&=\sup_{y,z\in C^*(H), {\tau}(y^*y)={\tau}(z^*z)=1}|\la P(x_\tau),(zy^*)_\tau  \ra|\\
&=\sup_{y,z\in C^*(H), {\tau}(y^*y)={\tau}(z^*z)=1}|\la x_\tau,P((zy^*)_\tau)  \ra|\\
&=\sup_{y,z\in C^*(H), {\tau}(y^*y)={\tau}(z^*z)=1}|\la x_\tau,(zy^*)_\tau  \ra|\\
&\leq\sup_{y,z\in C^*(G), {\tau}(y^*y)={\tau}(z^*z)=1}|\la x_\tau,(zy^*)_\tau  \ra|\\
&=\V \pi_\tau(x) \V.
\end{align*}
\end{proof}

\begin{lemma} Let $G$ be a finitely generated  nilpotent group  and  $\tau$  an  extreme trace on $G$ such that $\tau(g)=1$ implies $g=e.$ 
Let $N\leq Z(G)$ be a finite index subgroup of $Z(G)$. 
 Let $\omega$ be the trivial extension of $\tau|_N$ to $G.$  Then there is a central projection $p\in C^*_{\pi_\omega}(G_f)\cap C^*_{\pi_\omega}(G)'$ such that 
\[C^*_{\pi_\tau}(G)\cong pC^*_{\pi_\omega}(G).\]
\label{lemma:cutdown}
\end{lemma}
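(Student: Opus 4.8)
The strategy is to realize $C^*_{\pi_\tau}(G)$ as a quotient of $C^*_{\pi_\omega}(G)$, identify the kernel of this quotient, and show it is complemented by a central projection that already lives in $C^*_{\pi_\omega}(G_f)$. First I would check that $\omega$ genuinely is a trace on $G$ that vanishes off $G_f$: since $\tau$ is extreme, $\pi_\tau(G)''$ is a factor and $\pi_\tau|_{Z(G)}$ consists of scalars, so $\tau|_{Z(G)}$ — and hence $\tau|_N$ — is a character; and since $N$ is central, no conjugate of an element $x\notin N$ can lie in $N$, so $\omega$ is constant on conjugacy classes, i.e.\ a trace, which vanishes off $N\subseteq G_f$. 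As $\tau|_N=\omega|_N$, Lemma~\ref{lem:weakcontainment} then provides a surjective $*$-homomorphism $q\colon C^*_{\pi_\omega}(G)\to C^*_{\pi_\tau}(G)$ with $q(\pi_\omega(g))=\pi_\tau(g)$, and everything reduces to showing $\ker q=(1-p)C^*_{\pi_\omega}(G)$ for a suitable central projection $p$.

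Next I would reduce to $G_f$ by way of Theorem~\ref{thm:obs}. The hypothesis $\tau(g)=1\Rightarrow g=e$ says exactly that $\pi_\tau$ is faithful on $G$, so, just as in the proof of Theorem~\ref{thm:main}, \cite[Theorem~4.5]{Carey84} gives $\tau(g)=0$ for $g\notin G_f$; thus Theorem~\ref{thm:obs} applies to $\tau$, and it likewise applies to $\omega$. Fixing a single group splitting $G\cong G_f\rtimes\langle u_1\rangle\rtimes\cdots\rtimes\langle u_\ell\rangle$ ($u_k\in G$) as in the proof of Theorem~\ref{thm:obs} and applying the iterated form of Lemma~\ref{lem:crossedproducts} to it, once for $\omega$ and once for $\tau$, I obtain isomorphisms $C^*_{\pi_\omega}(G)\cong C^*_{\pi_\omega}(G_f)\rtimes_{\mathrm{Ad}\,\pi_\omega(u_1)}\Z\rtimes\cdots\rtimes_{\mathrm{Ad}\,\pi_\omega(u_\ell)}\Z$ and similarly for $\tau$, under which $q$ is carried to the iterated crossed product $q_0\rtimes\mathrm{id}\rtimes\cdots\rtimes\mathrm{id}$ of the restriction $q_0:=q|_{C^*_{\pi_\omega}(G_f)}\colon C^*_{\pi_\omega}(G_f)\to C^*_{\pi_\tau}(G_f)$; this compatibility is a routine check on generators, using that each implementing isomorphism of Lemma~\ref{lem:crossedproducts} sends $\pi_\omega(u_k)$, respectively $\pi_\tau(u_k)$, to the $k$-th implementing unitary.

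The heart of the matter is then finite-dimensional. Since $\pi_\omega(z)=\omega(z)\cdot 1$ is scalar for $z\in N$ and $[G_f:N]<\infty$ by Lemma~\ref{lem:findex}, the algebra $C^*_{\pi_\omega}(G_f)=\mathrm{span}\{\pi_\omega(x):x\in G_f\}$ is finite-dimensional, so $\ker q_0$ is a direct summand: writing $\ker q_0=(1-p)C^*_{\pi_\omega}(G_f)$ for a central projection $p$ of $C^*_{\pi_\omega}(G_f)$, the map $q_0$ induces an isomorphism $pC^*_{\pi_\omega}(G_f)\to C^*_{\pi_\tau}(G_f)$, and in particular $q(p)$ is the unit of $C^*_{\pi_\tau}(G_f)$, namely $1$. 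I would then observe that $p$ is in fact central in all of $C^*_{\pi_\omega}(G)$: since $G_f\trianglelefteq G$, both $C^*_{\pi_\omega}(G_f)$ and $\ker q$ are invariant under $\mathrm{Ad}\,\pi_\omega(g)$ for every $g\in G$ (the latter because $q\circ\mathrm{Ad}\,\pi_\omega(g)=\mathrm{Ad}\,\pi_\tau(g)\circ q$), hence so is their intersection $\ker q_0$, whence its unit $1-p$ is fixed by every $\mathrm{Ad}\,\pi_\omega(g)$; thus $p$ commutes with $\pi_\omega(G)$, i.e.\ $p\in C^*_{\pi_\omega}(G_f)\cap C^*_{\pi_\omega}(G)'$, and in particular $p$ is fixed by each $\mathrm{Ad}\,\pi_\omega(u_k)$.

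Finally I would assemble the pieces. As $p$ is a central projection of $C^*_{\pi_\omega}(G)$ fixed by the $\Z$-actions, cutting the decomposition of the second step by $p$ yields $pC^*_{\pi_\omega}(G)\cong pC^*_{\pi_\omega}(G_f)\rtimes_{\mathrm{Ad}\,\pi_\omega(u_1)}\Z\rtimes\cdots\rtimes_{\mathrm{Ad}\,\pi_\omega(u_\ell)}\Z$, and the restriction of $q$ to $pC^*_{\pi_\omega}(G)$ is identified with the iterated crossed product of the isomorphism $pC^*_{\pi_\omega}(G_f)\to C^*_{\pi_\tau}(G_f)$ by compatible $\Z$-actions, hence is itself an isomorphism onto $C^*_{\pi_\tau}(G)$. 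Since $q(1-p)=0$ forces $(1-p)C^*_{\pi_\omega}(G)\subseteq\ker q$, and injectivity of $q$ on $pC^*_{\pi_\omega}(G)$ gives the reverse inclusion, it follows that $\ker q=(1-p)C^*_{\pi_\omega}(G)$ and therefore $C^*_{\pi_\tau}(G)\cong C^*_{\pi_\omega}(G)/(1-p)C^*_{\pi_\omega}(G)=pC^*_{\pi_\omega}(G)$. The main obstacle is controlling $\ker q$: a priori it could be intricate, and the crossed-product decomposition of Theorem~\ref{thm:obs} is precisely what reduces the problem to the finite-dimensional algebra $C^*_{\pi_\omega}(G_f)$, after which normality of $G_f$ in $G$ promotes the resulting central projection to a globally central one.
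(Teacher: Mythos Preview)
Your proof is correct, but it follows a genuinely different route from the paper's. Both arguments begin the same way (use Lemma~\ref{lem:weakcontainment} to get the quotient $q$, observe $\tau|_N$ is a character, and note that $C^*_{\pi_\omega}(G_f)$ is finite-dimensional because $[G_f:N]<\infty$), and both extract a central projection $p\in C^*_{\pi_\omega}(G_f)$ with $\ker q_0=(1-p)C^*_{\pi_\omega}(G_f)$. The divergence is in how one lifts this to a description of $\ker q$ on all of $C^*_{\pi_\omega}(G)$. You invoke Theorem~\ref{thm:obs} twice to write both $C^*_{\pi_\omega}(G)$ and $C^*_{\pi_\tau}(G)$ as iterated $\Z$-crossed products over their $G_f$-pieces, identify $q$ with $q_0\rtimes\mathrm{id}\rtimes\cdots\rtimes\mathrm{id}$, and then use functoriality of crossed products to conclude that $q|_{pC^*_{\pi_\omega}(G)}$ is an isomorphism; centrality of $p$ falls out of the $\mathrm{Ad}\,\pi_\omega(g)$-invariance of $\ker q_0$. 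The paper, by contrast, never uses the crossed-product picture here: it builds trace-preserving conditional expectations $E_\omega,E_\tau$ onto the $G_f$-subalgebras (Lemma~\ref{lem:repcondexp}), shows the square with $q$ and $E_\omega,E_\tau$ commutes, and then runs a chain of equivalences using faithfulness of $\omega$ and $\tau$ to prove directly that $x\in\ker q\iff xp=0$; centrality of $p$ is then deduced by a short algebraic argument from the self-adjointness of $C^*_{\pi_\omega}(G)(1-p)$. Your approach recycles Theorem~\ref{thm:obs} and avoids the conditional-expectation lemma entirely, giving a clean structural explanation of why the kernel is governed by the finite-dimensional base; the paper's approach is independent of Theorem~\ref{thm:obs} and instead isolates a reusable faithful-trace/expectation technique that pins down the kernel intrinsically.
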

\begin{proof} Our hypotheses, combined with 
\cite[Theorem 4.5]{Carey84},
 imply that $\tau$ vanishes on  $G\setminus G_f.$ 
 By Lemma \ref{lem:weakcontainment}, the representation $\pi_\omega$ weakly contains $\pi_\tau.$ Therefore the *-homomorphism $\sigma:C^*_{\pi_\omega}(G)\rightarrow C^*_{\pi_\tau}(G)$ given by $\sigma(\pi_\omega(x))=\pi_\tau(x)$ is well-defined. From Lemma \ref{lem:repcondexp} (with $H=G_f$) we have trace-preserving conditional expectations $E_\omega:C^*_{\pi_\omega}(G)\rightarrow C^*_{\pi_\omega}(G_f)$ and $E_\tau:C^*_{\pi_\tau}(G)\rightarrow C^*_{\pi_\tau}(G_f).$  By the definitions of these expectations in Lemma \ref{lem:repcondexp} we have, for any   $x\in C^*(G)$,
\begin{equation*}
E_\tau(\sigma(\pi_\omega(x)))=E_\tau(\pi_\tau(x))=\pi_\tau(E(x))=\sigma(\pi_\omega(E(x)))=\sigma(E_\omega(\pi_\omega(x))). 
\end{equation*}
Equivalently, the following diagram commutes:
\begin{equation} \label{eq:commute}
\xymatrix{
C^*_{\pi_\omega}(G) \ar[rr]^\sigma \ar[d]^{E_\omega}&& C^*_{\pi_\tau}(G)\ar[d]^{E_\tau}\\
C^*_{\pi_\omega}(G_f)  \ar[rr]^\sigma  && C^*_{\pi_\tau}(G_f)}
\end{equation}
By \cite[Proposition 5.1]{Carey84} there is an irreducible representation $\pi$ of $G$ such that $\pi(x)\mapsto \pi_\tau(x)$ defines an isomorphism from $C^*_\pi(G)$ onto $C^*_{\pi_\tau}(G).$  
The irreducibility of $\pi$ then implies that $\pi_\tau(z)\in \C\cdot 1_{H_\tau}$ for all $z\in Z(G)$. 

Using formula \eqref{eq:twist} we see that $\pi_\omega(n) s_\omega = \omega(n) s_\omega $ for all $s \in c(G/N)$, hence $\pi_\omega(n) = \omega(n) \cdot 1_{H_\omega}$.   
By Lemma \ref{lem:findex}, the group $N$ has finite index in $G_f.$ 
Therefore,  $C^*_{\pi_\omega}(G_f)$ is finite dimensional by Lemma \ref{lem:twistedgroup}.

Since $C^*_{\pi_\omega}(G_f)$ is finite dimensional there is a projection $p\in C^*_{\pi_\omega}(G_f)\cap C^*_{\pi_\omega}(G_f)'$ such that 
\begin{equation}
\textup{ker}(\sigma)\cap C^*_{\pi_\omega}(G_f)=(1-p)C^*_{\pi_\omega}(G_f). \label{eq:kernel}
\end{equation}
Let $x\in C^*_{\pi_\omega}(G).$  By Lemma \ref{lem:repcondexp}, the conditional expectations $E_\omega$ and $E_\tau$ preserve their respective traces. Using this fact we obtain
\begin{align*}
x \in \ker(\sigma) & \Longleftrightarrow \tau(\sigma(x^*x))=0 \\
& \Longleftrightarrow \tau(E_\tau(\sigma(x^*x)))=0\\
& \Longleftrightarrow E_\tau(\sigma(x^*x))=0\\
& \Longleftrightarrow\sigma(E_\omega(x^*x))=0  \quad \textup{By (\ref{eq:commute})} \\
& \Longleftrightarrow pE_\omega(x^*x)=0 \quad \textup{By (\ref{eq:kernel})}\\
& \Longleftrightarrow E_\omega(px^*xp)=0\\
& \Longleftrightarrow \omega(E_\omega(px^*xp))=0\\
& \Longleftrightarrow \omega(px^*xp)=0\\
& \Longleftrightarrow xp=0\\ 
& \Longleftrightarrow x\in C^*_{\pi_\omega}(G)(1-p).
\end{align*}
It follows that $(1-p)$ is central.  Indeed, for any $x \in C^*_{\pi_\omega}(G),$ we have $(1-p)x=(x^*(1-p))^*\in C^*_{\pi_\omega}(G)(1-p)$ (it follows from the above list of equivalences that $C^*_{\pi_\omega}(G)(1-p)$ is self-adjoint). Then  $(1-p)x=y(1-p)$ for some $y\in C^*_{\pi_\omega}(G).$  Consequently, $(1-p)x(1-p)=y(1-p)=(1-p)x.$  It follows that $(1-p)xp=0.$ The same argument applied to $x^*$ shows $px(1-p)=0$, in other words,  that $x$ commutes with $p.$ 
\end{proof}
\begin{theorem} \label{thm:structure} Let $G$ be a finitely generated nilpotent group and $\pi$ a faithful irreducible representation of $G.$ 
There is a torsion free, finite index subgroup $N\leq Z(G)$, a 2-cocycle $\sigma$ on $G/N$,
  and a central projection $p\in C^*(G/N,\sigma)$, such that $C^*_\pi(G)\cong pC^*(G/N,\sigma).$ Moreover, $\sigma$ is homotopic to the trivial cocycle.
  \end{theorem}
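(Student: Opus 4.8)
The plan is to derive Theorem~\ref{thm:structure} by combining Lemma~\ref{lemma:cutdown} with Lemma~\ref{lem:twistedgroup}, and then to produce the homotopy explicitly, using in an essential way that $N$ may be taken torsion free. First I would pass to a trace: since $G$ is finitely generated, \cite[Proposition~5.1]{Carey84} produces an extreme trace $\tau$ on $G$ and an isomorphism $C^*_\pi(G)\cong C^*_{\pi_\tau}(G)$ carrying $\pi(x)$ to $\pi_\tau(x)$; because $\pi$ is faithful, so is $\pi_\tau$ on $G$. If $\tau(g)=1$, then for every $h\in G$ the trace property gives $\V\pi_\tau(g)h_\tau-h_\tau\V^2=2-2\operatorname{Re}\tau(h\inv gh)=2-2\tau(g)=0$, so $\pi_\tau(g)=1$ and hence $g=e$; thus $\tau$ satisfies the hypotheses of Lemma~\ref{lemma:cutdown}. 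Next, since $G$ is finitely generated nilpotent, $Z(G)$ is a finitely generated abelian group, hence contains a torsion-free subgroup $N$ of finite index (the free part of $Z(G)\cong\Z^r\oplus T(Z(G))$, of index $|T(Z(G))|<\infty$). Irreducibility of $\pi$ forces $\pi_\tau(z)\in\C\cdot1_{H_\tau}$ for $z\in Z(G)$, so $\tau|_{Z(G)}$ is a character; I let $\omega\in\widehat N$ be its restriction, extended trivially to $G$, fix a section $c\colon G/N\to G$, and let $\sigma$ be the $2$-cocycle on $G/N$ given by~\eqref{eq:cocycle}.

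\emph{The central cutdown.} Lemma~\ref{lemma:cutdown} now yields a central projection $p\in C^*_{\pi_\omega}(G_f)\cap C^*_{\pi_\omega}(G)'$ with $C^*_{\pi_\tau}(G)\cong pC^*_{\pi_\omega}(G)$, while Lemma~\ref{lem:twistedgroup} (valid since nilpotent groups are amenable) gives $C^*_{\pi_\omega}(G)\cong C^*(G/N,\sigma)$. Composing these with $C^*_\pi(G)\cong C^*_{\pi_\tau}(G)$ and transporting $p$ along the isomorphism of Lemma~\ref{lem:twistedgroup}, I obtain a central projection, still called $p$, in $C^*(G/N,\sigma)$ with $C^*_\pi(G)\cong pC^*(G/N,\sigma)$. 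This proves everything except homotopy triviality of $\sigma$.

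\emph{The homotopy.} Here one uses that $\sigma$ factors through an $N$-valued cocycle. Since $N$ is central, hence normal, $\rho(xN,yN):=c(xN)c(yN)c(xyN)\inv$ lands in $N$ and defines a $2$-cocycle on $G/N$ with coefficients in the abelian group $N$ (the obstruction to $c$ being a homomorphism), and $\sigma=\omega\circ\rho$ by~\eqref{eq:cocycle}. Now identify $N\cong\Z^r$ and $\widehat N\cong\T^r$, write $\omega=(e^{2\pi i\theta_1},\dots,e^{2\pi i\theta_r})$, and set $\omega_t=(e^{2\pi i t\theta_1},\dots,e^{2\pi i t\theta_r})$ for $t\in[0,1]$: this is a continuous path in $\widehat N$ from the trivial character $\omega_0$ to $\omega_1=\omega$. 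Then $\sigma_t:=\omega_t\circ\rho$ is a $2$-cocycle on $G/N$ for each $t$, the map $t\mapsto\sigma_t(xN,yN)$ is continuous for every fixed pair, and $\sigma_0$ is trivial while $\sigma_1=\sigma$; hence $\sigma$ is homotopic to the trivial cocycle.

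I expect the homotopy step to be the only genuinely new point, and the crucial feature there is torsion-freeness of $N$: a character of a group with torsion generally cannot be connected to the trivial character inside its dual, so the choice of $N$ in the first step is not merely cosmetic. On the algebraic side, the only thing needing care is confirming that $\tau$ meets the hypotheses of Lemma~\ref{lemma:cutdown} — in particular that $\tau(g)=1$ implies $g=e$ — which is precisely where faithfulness of $\pi$ is used, via the Carey--Moran isomorphism; the rest is bookkeeping.
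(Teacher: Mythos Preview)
Your proposal is correct and follows essentially the same route as the paper: pass to an extreme trace via \cite[Proposition~5.1]{Carey84}, choose a torsion-free finite-index $N\leq Z(G)$, apply Lemmas~\ref{lemma:cutdown} and~\ref{lem:twistedgroup}, and use path-connectedness of $\widehat N\cong\T^r$ for the homotopy. You are simply more explicit than the paper in two places---verifying that $\tau(g)=1\Rightarrow g=e$ (which the paper absorbs into ``as in the proof of Theorem~\ref{thm:main}'') and writing out the path $\omega_t$ rather than just invoking that a torus is path-connected---but these are elaborations, not a different argument.
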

  \begin{proof} As in the proof of Theorem \ref{thm:main} there is an extreme trace $\tau$ on $G$ satisfying the hypotheses of Lemma \ref{lemma:cutdown} such that $C^*_\pi(G)\cong C^*_{\pi_\tau}(G).$
  
 Since $Z(G)$ is a finitely generated abelian group it contains a finite index torsion free subgroup $N.$  Then $\widehat{N}$ is a $d$-torus and in particular path-connected.  It follows that the cocycle defined in (\ref{eq:cocycle}) for any $\omega\in \widehat{N}$ is homotopic to the trivial cocycle.  The conclusion now follows from Lemmas \ref{lem:twistedgroup} and \ref{lemma:cutdown}.   
 \end{proof}
 \begin{remark} Let $C^*(G/N,\sigma)$ be as in Theorem \ref{thm:structure}. Since $G/N$ is amenable,  \cite[Theorem 1.9]{Echterhoff10} applies to $C^*(G/N,\sigma).$  In particular, since $\sigma$ is homotopic to the trivial cocycle, we have $K_*(C^*(G/N,\sigma))\cong K_*(C^*(G/N)).$  Since K-theory is additive over direct sums, we feel that Theorem \ref{thm:structure} may be beneficial for K-theory calculations of C*-algebras generated by irreducible representations of nilpotent groups (see for example \cite{Eckhardt14a} for this idea in action).
 \end{remark}
\bibliographystyle{plain}

\end{document}